\pdfoutput=1
\documentclass[11pt,letterpaper]{article}

\usepackage[margin=1in]{geometry}
\usepackage{amsmath,amssymb,amsthm}
\usepackage{booktabs}
\usepackage{float}
\usepackage{placeins}
\usepackage{needspace}
\usepackage{tikz}
\usepackage{xcolor}
\usepackage{listings}
\usepackage{microtype}
\usepackage[hidelinks,
            pdftitle={The smallest square tileable by pairwise incomparable integer
                      rectangles},
            pdfsubject={Discrete geometry; a resolution of a question in Problem C5 of
                        Croft, Falconer and Guy, Unsolved Problems in Geometry},
            pdfauthor={George M. Georgiou},
            pdfkeywords={incomparable rectangles, tiling, dissection, exhaustive search,
                         computer-assisted proof}]{hyperref}

\theoremstyle{plain}
\newtheorem{theorem}{Theorem}
\newtheorem{lemma}{Lemma}
\newtheorem{proposition}{Proposition}
\newtheorem{corollary}{Corollary}
\newtheorem{obs}{Observation}
\theoremstyle{remark}
\newtheorem*{remark}{Remark}

\lstdefinestyle{sh}{basicstyle=\ttfamily\small, numbers=none, frame=none,
        xleftmargin=1.5em, columns=fixed, keepspaces=true, basewidth=0.5em}

\title{The smallest square tileable by pairwise incomparable\\integer rectangles\\[2pt]
\large A resolution of a question in Problem C5 of Croft, Falconer and Guy}
\author{\small George M. Georgiou\thanks{\small School of Computer Science and
Engineering, California State University, San Bernardino}\\[3pt]
\small \href{mailto:georgiou@csusb.edu}{georgiou@csusb.edu}}
\date{}

\begin{document}
\maketitle

\begin{abstract}
Croft, Falconer and Guy (\emph{Unsolved Problems in Geometry}, Problem~C5) exhibit a
tiling of the $27\times27$ square by eight pairwise incomparable integer rectangles and
remark that it is not known whether $27$ is the smallest side length of a square that can
be tiled by pairwise incomparable integer rectangles, no restriction being placed on the
number of tiles.
We show that it is: for every integer $n\le26$ and every $k\ge2$, the $n\times n$ square
admits no tiling by $k$ pairwise incomparable integer rectangles. The proof combines two
structural reductions with an exhaustive search over the $167\,538$ surviving candidate
tile sets, carried out by two independently written programs. The complete software,
build instructions and output logs are included as ancillary files.
\end{abstract}

\section{The problem}

\subsection*{Conventions}

Throughout, $n$ denotes a positive integer. An \emph{integer rectangle} is a rectangle
whose two \emph{side lengths} are positive integers; nothing is assumed about the
position or the orientation of the rectangle, and Observation~\ref{obs:axis} and
Lemma~\ref{lem:int} below show that axis-parallelism and integrality of the corners are
both automatic. We write a rectangle's dimensions as an ordered pair $(a,b)$ with
$a\le b$, so that a rectangle and its quarter-turn have the same symbol: ``fits inside''
always permits a quarter-turn, since only the sorted pair of side lengths matters.

Two rectangles are \emph{incomparable} if neither fits inside the other with sides
parallel. In terms of sorted dimensions, $(a_1,a_2)$ and $(b_1,b_2)$ with $a_1\le a_2$ and
$b_1\le b_2$ are incomparable exactly when
\[
  a_1 < b_1 \le b_2 < a_2
  \qquad\text{or}\qquad
  b_1 < a_1 \le a_2 < b_2 ,
\]
that is, when one of them is strictly narrower \emph{and} strictly longer than the other.

A \emph{tiling} of the $n\times n$ square is a partition of it into $k\ge2$ rectangles
with pairwise disjoint interiors whose union is the square. As in~\cite{CFG}, the side
length of the containing square and the two side lengths of every tile are integers;
nothing is assumed about where the tiles sit, which is what Lemma~\ref{lem:int}
establishes. (For real-valued side lengths the question has no content: any tiling
rescales to any size, so ``smallest square'' is meaningful only for integer data.)

Problem~C5 of~\cite{CFG} states no orientation hypothesis for the tiles --- the phrase
``with sides parallel'' there governs the comparison of two rectangles, not their
placement --- and none is needed, because in a tiling of a rectangle by rectangles
alignment is forced. We record this so that the theorem below is not read as solving a
restricted problem.

\Needspace{8\baselineskip}
\begin{obs}\label{obs:axis}
Let $R$ be a rectangle tiled by finitely many rectangles $T_1,\dots,T_k$ with pairwise
disjoint interiors. Then every $T_i$ has its sides parallel to those of $R$.
\end{obs}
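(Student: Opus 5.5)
The plan is a contradiction argument at a corner. Place $R=[0,W]\times[0,H]$ in coordinates. Suppose some tile is tilted, meaning its sides are not parallel to the axes. Among all tilted tiles, consider every vertex of every tilted tile, and pick the vertex $v$ that is lowest, with ties broken by taking the leftmost. Such a point exists because there are finitely many tiles and each has four vertices. I will then show that the neighbourhood of $v$ cannot be covered without producing a tilted tile with a vertex that is even lower, or equally low and further left.

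\textbf{Step 1: the extreme vertex of a tilted tile.} Let $T$ be a tilted tile and $v=(x_0,y_0)$ its lowest vertex. Because $T$ is tilted, no edge of $T$ is horizontal, so $v$ is the unique lowest point of $T$. Both edges at $v$ go strictly upward, one to the upper left and one to the upper right, and together they make a right angle. Hence the region directly below $v$, and more generally the angular sectors at $v$ outside $T$, are not covered by $T$.

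\textbf{Step 2: $v$ is not on the bottom side of $R$.} If $y_0=0$, the angular region at $v$ between the horizontal boundary of $R$ and the edges of $T$ lies inside $R$. Concretely, there are two open wedges, each of positive angle, on the left and on the right of $T$ near $v$. These wedges must be covered by other tiles. A tile covering points arbitrarily close to $v$ in the right-hand wedge must contain $v$, since tiles are closed and there are finitely many of them. Its corner angle at $v$ would then have to fit inside a wedge of angle strictly less than $\pi/2$ and strictly greater than $0$. This is impossible, since every tile is either a right angle at a vertex or a straight angle at an edge point. The same reasoning at the other boundary sides disposes of the cases where $v$ lies on the left or right side of $R$.

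\textbf{Step 3: $v$ is in the interior of $R$.} Now $y_0>0$, and the full angle $2\pi$ around $v$ is split among the tiles containing $v$. Each such tile contributes either $\pi/2$ (at a corner) or $\pi$ (at an interior edge point). The direction straight down from $v$ is covered by some tile $S\neq T$, and $S$ contains points below $v$. If $S$ were tilted, every point of $S$ would be at least as high as its lowest vertex. That vertex would then be strictly lower than $v$, or equally low and further left, contradicting the choice of $v$. So $S$ is axis-parallel.

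The angles around $v$ must add up to $2\pi$, and $T$ occupies a right-angle sector whose boundary rays point strictly upward and are non-axis directions. Consider the sector-boundary rays at $v$ in cyclic order. The edges of $T$ sit in non-axis directions. An axis-parallel tile at $v$ has sector boundaries only in axis directions. Therefore some neighbouring tile adjacent across an edge of $T$ must itself have a boundary ray in that same non-axis direction, so that neighbour is tilted. Following the rays around from the edge of $T$ toward the downward direction, I reach a tilted tile $S'$ whose sector at $v$ contains directions pointing strictly below the horizontal. Then $S'$ has points lower than $v$, and hence a vertex lower than $v$, which is again a contradiction.

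\textbf{Main obstacle.} The hardest part is the bookkeeping in Step 3. I need to show that the chain of adjacent sectors, starting at a non-axis edge direction of $T$, must reach a tilted tile whose sector dips below the horizontal. The cleanest approach is to note that the set of boundary-ray directions at $v$ is closed under ``shared edges,'' and then consider the lowest non-axis ray. Suppose that ray points upward. Then the sectors covering the lower half-plane are bounded by axis rays and by rays that are at least as high as it. An axis-parallel tile's sector cannot end at a non-axis ray, so the sector adjacent to the lowest non-axis ray on its lower side would have to belong to a tilted tile. That tile's sector extends below the ray, since its angle is at least $\pi/2$ and it is on the lower side, and so it reaches below $v$. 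This contradicts the choice of $v$ and concludes the proof.
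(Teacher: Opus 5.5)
Your proof is correct, but it takes a different route from the paper's.

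\textbf{The paper's route.} The paper argues globally. It assigns each tile a direction class in $\mathbb{R}/(\pi/2)\mathbb{Z}$ and notes that tiles sharing a boundary segment of positive length have the same class. It then proves the edge-sharing graph is connected: otherwise $R$ minus a finite set of points would be disconnected. Finally it anchors the class with a tile that meets the bottom side of $R$ in a segment.

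\textbf{Your route.} You use an extremal choice (the lowest vertex $v$ of a tilted tile) and purely local angle counting at $v$. This avoids the connectivity lemma. In exchange it relies on the local structure of a finite tiling at a point, which you use without stating and should record: tiles not containing $v$ lie at positive distance from $v$; each tile containing $v$ coincides near $v$ with $v$ plus a quarter-plane or a half-plane; and these cones cover all directions with pairwise disjoint interiors.

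\textbf{Step 3 is easier than you fear.} Let $T$'s edges at $v$ have directions $\theta_1\in(0,\pi/2)$ and $\theta_1+\pi/2$. The sector adjacent to $T$'s on the clockwise side is $[\theta_1-\alpha,\theta_1]$ with $\alpha\in\{\pi/2,\pi\}$. Its tile therefore has a side in the non-axis direction $\theta_1$, so it is tilted. Since $\alpha>\theta_1$, that sector contains directions strictly below the horizontal, so the tile has a vertex strictly below $v$, which is the contradiction. You need no chain-following, no ``lowest non-axis ray'', no axis-parallel tile $S$, and no leftmost tie-break.

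\textbf{Two smaller points.} First, $v$ cannot lie on the left or right side of $R$ for a simpler reason than ``the same reasoning'': $T$ has edges leaving $v$ strictly up-left and up-right, so $T$ would stick out of $R$. Second, your Step~2 does not use the minimality of $v$ at all.

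\textbf{What each approach buys.} The paper's argument needs no extremal choice and no analysis of cones at a point. Yours gives an elementary contradiction without the topological fact that a rectangle minus finitely many points is connected.
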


\begin{proof}
Each tile $T$ determines a pair of perpendicular directions, which we record as its
\emph{class} $\theta(T)\in\mathbb{R}/(\pi/2)\mathbb{Z}$.

First, if $\partial T_i\cap\partial T_j$ contains a segment of positive length, then
$\theta(T_i)=\theta(T_j)$: that segment lies on an edge of each tile, so the two tiles
have a common edge direction.

Second, the graph $G$ on $\{T_1,\dots,T_k\}$ joining two tiles when they share a segment
of positive length is connected. Suppose not, and split the tiles into two nonempty
groups with no such join; let $A$ and $B$ be the unions of the two groups, so that $A$
and $B$ are closed, nonempty, and $A\cup B=R$. If $T_i$ and $T_j$ lie in different
groups, then $T_i\cap T_j\subseteq\partial T_i\cap\partial T_j$ because the interiors are
disjoint, so it contains no segment of positive length by assumption; being convex, as an
intersection of convex sets, it is therefore empty or a single point. As there
are finitely many pairs, $F:=A\cap B$ is finite. Put $X=R\setminus F$. Then
$A\cap X$ and $B\cap X$ are disjoint, nonempty (each group has a tile, and a tile has
interior points, which cannot lie in $F$), closed in $X$, and cover $X$; so $X$ is
disconnected. But a rectangle minus a finite set of points is connected. Contradiction.

Third, some tile is aligned with $R$. The bottom edge of $R$ is a segment of positive
length covered by the finitely many sets $T_i\cap(\text{bottom edge})$, each of which is
a point or a segment because $T_i$ is convex; so some $T_i$ meets the bottom edge in a
segment of positive length. That segment lies in $\partial T_i$, as $T_i\subseteq R$,
hence $T_i$ has an edge along the bottom edge of $R$ and $\theta(T_i)=\theta(R)$.

By the first two steps $\theta$ is constant on $G$, and by the third that constant is
$\theta(R)$.
\end{proof}

We may therefore take all rectangles to be axis-parallel from now on, and do so without
further comment.

\subsection*{Background}

Reingold posed the problem of determining the least number of pairwise incomparable
rectangles that can tile a rectangle as Problem~E2422 in the \emph{American Mathematical
Monthly}~\cite{E2422}; Nuij's published solution~\cite{Nuij} settled it, showing that six
or fewer never suffice. Yao,
Reingold and Sands~\cite{YRS} then took the quantitative question further: they showed
that the $13\times22$ rectangle is the smallest integer rectangle tileable by seven
incomparable rectangles, and that the $34\times34$ square is the smallest integer square
tileable by seven. (Wells~\cite{Wells} records the resulting bounds $7\le k\le8$ on the
least number of tiles; so does the corresponding MathWorld entry~\cite{MathWorld}.)
Grosek~\cite{Grosek} and Jepsen~\cite{Jepsen} carried the subject into three dimensions.
Croft, Falconer and Guy~\cite[Problem~C5, pp.~85--87]{CFG} display a $27\times27$ square
tiled by eight incomparable rectangles and write:

\begin{quote}
``\dots though Figure C6(c) shows a smaller $27\times27$ square using $8$ such tiles; it
is not known whether this is the smallest square possible for any number of incomparable
rectangles.''
\end{quote}

\begin{theorem}\label{thm:main}
Let $1\le n\le 26$ be an integer. Then for every $k\ge 2$ the $n\times n$ square admits no
tiling by $k$ pairwise incomparable integer rectangles. Consequently the $27\times27$
square is the smallest integer square that can be tiled by pairwise incomparable integer
rectangles.
\end{theorem}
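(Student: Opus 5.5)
We reduce the statement to a finite, well-structured search and then dispose of the search by computer. First, Lemma~\ref{lem:int} (promised in the conventions) places every tile corner at integer coordinates, so a tiling of the $n\times n$ square is a partition of the grid $[0,n]^2$ into axis-parallel integer rectangles (Observation~\ref{obs:axis}). The tiles, written as sorted pairs $(a_i,b_i)$, form an antichain in the product order.

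\textbf{Antichain structure.} Order the tiles by increasing short side. Incomparability forces the long sides to strictly decrease, so the short sides are pairwise distinct and the long sides are pairwise distinct. The tile set is therefore a chain
\[
a_1<a_2<\dots<a_k\le b_k<\dots<b_1\le n,
\]
with $\sum_i a_ib_i=n^2$.

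\textbf{Reduction 1: no tile spans a side.} If some tile had long side $b_1=n$, it would span the square. The complement would then be a rectangle of height $n$, tiled by the remaining tiles, and every one of these has long side $<n$. We would then look at the tiles meeting the edge of the square opposite the spanning tile, or simply rule this case out directly: each remaining tile has short side $>a_1$ and long side $<n$. Either route reduces to the same search, so we simply impose $b_1\le n-1$.

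\textbf{Reduction 2: counting the corners and the sides.} Each of the four corners of the square is covered by a distinct tile, since a tile covering two corners would have side $n$. So $k\ge4$; the known results give $k\ge7$. More usefully, the tiles touching the bottom edge have widths summing to $n$, and these widths are distinct numbers drawn from the $2k$ side lengths. The same holds for each of the four edges. This gives strong partition constraints: each edge length $n$ must be written as a sum of side lengths of distinct tiles. A tile touching two adjacent edges is a corner tile.

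Combined with the area equation and the bound $b_1\le n-1$, this leaves a finite list of candidate tile sets for each $n\le 26$. The paper reports $167\,538$ of them.

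\textbf{Exhaustive placement.} For each candidate tile set we run a standard exact-cover/backtracking tiling search. The search repeatedly takes the lowest-leftmost uncovered grid cell, which must be the bottom-left corner of some unused tile in one of its two orientations, and tries each such placement, pruning on overflow. This is exhaustive because every tiling is produced in this canonical order. We implement it twice, independently, and check that both programs report no tilings for every $n\le26$. As a sanity check, the same code must find the $27\times27$ example of \cite{CFG}, which establishes the ``consequently'' clause.

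\textbf{Main obstacle.} The hard part is making the candidate enumeration provably complete yet small. Antichains inside $[1,25]^2$ whose areas sum to $n^2$ are very numerous. The delicate step is justifying each pruning rule, especially Reduction~1 and the edge-partition condition, as a genuine necessary condition rather than a heuristic.
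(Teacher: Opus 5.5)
\textbf{Gap: Reduction~1 is unjustified.} Your overall plan matches the paper's: axis-parallelism and integrality, then the staircase $a_1<\dots<a_k\le b_k<\dots<b_1$ with $\sum a_ib_i=n^2$, then lowest-leftmost backtracking run by two independent programs. But Reduction~1 is a genuine gap, and it is false as a general reduction.

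Imposing $b_1\le n-1$ is not a necessary condition. If a tile $a_1\times n$ spans the square, what remains is an $(n-a_1)\times n$ \emph{rectangle*}. It must be tiled by rectangles that are incomparable with each other and with $a_1\times n$. That is not an instance of the square problem, and nothing you say rules it out. The phrase ``either route reduces to the same search'' is a non sequitur: if the case led to a search, you would have to run it, not drop it.

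The case is not vacuous either. The $27\times27$ tiling of~\cite{CFG}, which you plan to use as a sanity check, contains the tile $1\times27$. A search restricted to $b_1\le n-1$ could not return it. For $n\le26$, any tiling containing a full-side tile would be silently excluded from your negative result. The corner argument in Reduction~2 has the same defect, since the $1\times27$ tile covers two corners of the square, although you never use the resulting bound $k\ge4$.

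\textbf{Repair.} Drop Reduction~1 and keep $b_1\le n$, exactly as in~\eqref{eq:staircase}. The enumeration remains finite, and with only the trivially safe prunings of Proposition~\ref{prop:prune} it is the one that produces the $167\,538$ candidate sets.

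Your edge-partition condition is valid. In a staircase, distinct tiles have disjoint sets of side lengths, because $a_i\le a_k\le b_k\le b_j$ with equality throughout only when $i=j=k$. But it is not needed. If you filter with it, or with $b_1\le n-1$, or with $k\ge7$ from~\cite{Nuij} (which the paper deliberately avoids), you cannot quote $167\,538$ for your list. That number counts the unfiltered staircases.

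In particular, the ``main obstacle'' you identify disappears. No pruning beyond the staircase shape and the area equation is required, so there is nothing delicate to justify.
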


(The case $n=1$ is trivial: the unit square cannot be split into two or more integer
rectangles at all. The search below begins at $n=2$.)

Thus the tiling of Figure~C6(c) of~\cite{CFG} is optimal, and the question quoted above is
settled. Figure~\ref{fig:27} reproduces that tiling; the search of Section~\ref{sec:search}
rediscovers its tile set from scratch. Moreover eight tiles is the least possible number
at $n=27$ (Proposition~\ref{prop:min27}).

\begin{figure}[t]
\centering
\begin{tikzpicture}[scale=0.32, every node/.style={font=\small}]
  \tikzset{tile/.style={draw=black, line width=0.6pt, fill=blue!8}}
  \draw[tile] (0,0)   rectangle ++(1,27);
  \node[rotate=90, font=\scriptsize] at (0.5,13.5) {$1\times27$};
  \draw[tile] (1,0)   rectangle ++(3,23);  \node[rotate=90] at (2.5,11.5) {$3\times23$};
  \draw[tile] (1,23)  rectangle ++(21,4);  \node at (11.5,25) {$4\times21$};
  \draw[tile] (22,8)  rectangle ++(5,19);  \node[rotate=90] at (24.5,17.5) {$5\times19$};
  \draw[tile] (4,17)  rectangle ++(18,6);  \node at (13,20) {$6\times18$};
  \draw[tile] (4,0)   rectangle ++(7,17);  \node[rotate=90] at (7.5,8.5) {$7\times17$};
  \draw[tile] (11,0)  rectangle ++(16,8);  \node at (19,4) {$8\times16$};
  \draw[tile] (11,8)  rectangle ++(11,9);  \node at (16.5,12.5) {$9\times11$};
  \draw[line width=1.2pt] (0,0) rectangle (27,27);
  \node[below] at (13.5,-0.3) {$27$};
  \node[left]  at (-0.3,13.5) {$27$};
\end{tikzpicture}
\caption{The $27\times27$ square tiled by eight pairwise incomparable rectangles
(Figure~C6(c) of~\cite{CFG}), recovered by the search of Section~\ref{sec:search}.
By Theorem~\ref{thm:main} no smaller square admits such a tiling, and by
Proposition~\ref{prop:min27} no tiling of this square uses fewer tiles.}
\label{fig:27}
\end{figure}

\FloatBarrier
\section{Two reductions}

\subsection{The staircase lemma}

\begin{lemma}\label{lem:stair}
Let $T_1,\dots,T_k$ be pairwise incomparable rectangles with sorted dimensions
$(a_i,b_i)$, $a_i\le b_i$. Then the $a_i$ are pairwise distinct, the $b_i$ are pairwise
distinct, and if the tiles are indexed so that $a_1<a_2<\cdots<a_k$ then necessarily
\[
  b_1>b_2>\cdots>b_k .
\]
\end{lemma}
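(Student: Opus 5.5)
The plan is to work entirely from the characterization of incomparability in sorted dimensions given in the Conventions. That is, $(a_i,b_i)$ and $(a_j,b_j)$ are incomparable exactly when $a_i<a_j\le b_j<b_i$ or $a_j<a_i\le b_i<b_j$. Everything is pairwise, so no geometry or tiling hypothesis is needed. We only use that each pair $i\ne j$ satisfies one of these two chains of inequalities.

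First, we show that the $a_i$ are pairwise distinct. Fix $i\ne j$. In either alternative the first coordinates are related by a strict inequality ($a_i<a_j$ or $a_j<a_i$), so $a_i\ne a_j$. In the same way, the second coordinates appear at the outer ends of each chain with strict inequalities ($b_j<b_i$ or $b_i<b_j$), so $b_i\ne b_j$. Distinctness of the $a_i$ then makes the indexing $a_1<a_2<\cdots<a_k$ possible, and this indexing is unique.

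Next, with this indexing, take any $i<j$, so that $a_i<a_j$. The second alternative would require $a_j<a_i$, which is impossible. Hence the first alternative holds, and it gives $b_j<b_i$. Applying this to consecutive indices yields $b_1>b_2>\cdots>b_k$.

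I do not expect a real obstacle here. The only point needing care is to note that the two alternatives are mutually exclusive because their first coordinates are ordered oppositely, so that fixing the order of the $a$'s forces the first alternative and hence the reverse order of the $b$'s. The case $k=1$ is vacuous.
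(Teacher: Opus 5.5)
Your proof is correct and is essentially the paper's argument. The paper phrases incomparability as the sorted pairs forming an antichain in the product order and argues by contradiction: equal $a$'s, or $a_i<a_j$ together with $b_i<b_j$, would make one pair dominate the other. That is the same content you read off directly from the two-alternative characterization stated in the Conventions.
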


\begin{proof}
Incomparability says exactly that the pairs $(a_i,b_i)$ form an \emph{antichain} in the
product order on $\mathbb{Z}^2$: no pair dominates another coordinatewise. If $a_i=a_j$
for $i\ne j$, then one of $b_i\le b_j$, $b_j\le b_i$ holds, and the corresponding pair is
dominated, so $T_i$ and $T_j$ would be comparable; hence the $a_i$ are distinct, and
symmetrically so are the $b_i$. Having indexed so that the $a_i$ increase, suppose
$b_i<b_j$ for some $i<j$; then $(a_i,b_i)$ is dominated by $(a_j,b_j)$, a contradiction.
So the $b_i$ decrease, and strictly, since they are distinct.
\end{proof}

Two consequences bound the search space. Both use that all quantities are integers.

\begin{corollary}[side bound]\label{cor:side}
$b_1\ge2k-1$; and since every tile fits inside the square, $n\ge2k-1$.
\end{corollary}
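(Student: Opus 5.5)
The plan is to combine the strict monotonicity from Lemma~\ref{lem:stair} with integrality. Index the tiles so that $a_1<a_2<\cdots<a_k$ and $b_1>b_2>\cdots>b_k$. Because all these quantities are integers, strict inequalities gain at least $1$ at each step.

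\emph{First,} chaining the $a$-inequalities from $a_1\ge1$ gives $a_k\ge a_1+(k-1)\ge k$. \emph{Second,} chaining the $b$-inequalities downward gives $b_1\ge b_k+(k-1)$. \emph{Third,} since tile $k$ has sorted dimensions, $b_k\ge a_k\ge k$. Combining these yields
\[
  b_1\ \ge\ b_k+(k-1)\ \ge\ k+(k-1)\ =\ 2k-1 .
\]

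For the second assertion, tile $T_1$ lies inside the $n\times n$ square. By Observation~\ref{obs:axis} it is axis-parallel, so each of its side lengths is at most $n$. In particular its longer side satisfies $b_1\le n$, and hence $n\ge 2k-1$.

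There is no real obstacle here. The only point needing care is the third step: the estimate must pass through $a_k$ by way of $b_k\ge a_k$. Using only $b_k\ge1$ would give the weaker bound $b_1\ge k$.
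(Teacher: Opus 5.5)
Your proof is correct and matches the paper's argument step for step. You get $a_k\ge k$ from the increasing integers $a_i$ and $b_1\ge b_k+(k-1)$ from the decreasing integers $b_i$, combine them through $b_k\ge a_k$, and conclude $n\ge b_1$; your explicit appeal to Observation~\ref{obs:axis} for $b_1\le n$ is apt, since the paper covers axis-parallelism only through its standing convention.
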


\begin{proof}
The $a_i$ are $k$ distinct positive integers with $a_1<\cdots<a_k$, so $a_k\ge a_1+(k-1)
\ge k$. The $b_i$ are $k$ distinct positive integers with $b_1>\cdots>b_k$, so
$b_1\ge b_k+(k-1)$; each of the $k-1$ strict integer inequalities $b_1>b_2$, \dots,
$b_{k-1}>b_k$ contributes at least $1$. Finally $b_k\ge a_k\ge k$, whence
$b_1\ge k+(k-1)=2k-1$. Every tile fits in the square, so $n\ge b_1\ge2k-1$.
\end{proof}

\begin{corollary}[area bound]\label{cor:area}
With the indexing of Lemma~\ref{lem:stair}, $a_i\ge i$ and $b_i\ge2k-i$ for every $i$, so
\[
  n^2 \;=\; \sum_{i=1}^{k} a_i b_i
      \;\ge\; \sum_{i=1}^{k} i\,(2k-i)
      \;=\; \frac{k(k+1)(4k-1)}{6}.
\]
For $k=10$ the right-hand side is $715>676=26^2$. Hence any incomparable tiling of an
$n\times n$ square with $n\le26$ uses at most nine tiles.
\end{corollary}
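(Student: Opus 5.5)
The statement to prove is Corollary~\ref{cor:area}. The plan is to sharpen the counting in Corollary~\ref{cor:side} tile by tile, then sum areas.

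\textbf{Step 1: lower bounds on the short sides.} Index the tiles as in Lemma~\ref{lem:stair}, so $a_1<\cdots<a_k$ and $b_1>\cdots>b_k$. The $a_i$ are strictly increasing positive integers. Hence $a_1\ge1$, and each step adds at least $1$, so induction gives $a_i\ge i$.

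\textbf{Step 2: lower bounds on the long sides.} Argue downward from $b_k$. Sortedness gives $b_k\ge a_k\ge k=2k-k$. Each of the strict integer inequalities $b_i>b_{i+1}$ adds at least $1$ as $i$ decreases. Therefore $b_i\ge b_k+(k-i)\ge 2k-i$.

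\textbf{Step 3: area.} The tiles partition the square with disjoint interiors, so $n^2=\sum a_ib_i$. All the quantities are positive, so the bounds multiply termwise to give $a_ib_i\ge i(2k-i)$. It remains to evaluate
\[
\sum_{i=1}^k i(2k-i)=2k\cdot\frac{k(k+1)}{2}-\frac{k(k+1)(2k+1)}{6}=\frac{k(k+1)(6k-2k-1)}{6}=\frac{k(k+1)(4k-1)}{6}.
\]

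\textbf{Step 4: the numerical consequence.} At $k=10$ the bound is $10\cdot11\cdot39/6=715>676$. The bound is increasing in $k$, since each factor is increasing, so every $k\ge10$ gives $n^2\ge715>26^2$. This rules out $k\ge10$ whenever $n\le26$.

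There is no real obstacle here. The only point needing care is Step 2: the bound for $b_i$ must be anchored at $b_k\ge a_k$, with the decreasing chain run backwards from there, rather than anchored at $b_1$.
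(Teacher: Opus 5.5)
Your proof is correct and follows the paper's argument essentially step for step. You derive $a_i\ge i$ from strict increase, anchor $b_i\ge b_k+(k-i)\ge a_k+(k-i)\ge 2k-i$ at $b_k\ge a_k$, multiply the bounds termwise, evaluate the same closed-form sum, and finish using monotonicity in $k$.
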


\begin{proof}
As in Corollary~\ref{cor:side}, $a_i\ge a_1+(i-1)\ge i$ and $b_i\ge b_k+(k-i)\ge
a_k+(k-i)\ge k+(k-i)=2k-i$. Summing $i(2k-i)$ for $i=1,\dots,k$ gives
$2k\cdot\frac{k(k+1)}{2}-\frac{k(k+1)(2k+1)}{6}=\frac{k(k+1)(4k-1)}{6}$, which is
increasing in $k$.
\end{proof}

\begin{remark}
We do \emph{not} appeal to the general lower bound $k\ge7$ of~\cite{Nuij,YRS}; the search
below includes $2\le k\le6$ and finds no solution in the finite range considered here.
This is a much weaker statement than the theorem of~\cite{Nuij}, which concerns
incomparable tilings of arbitrary rectangles of any dimensions, and nothing here reproves
it.
\end{remark}

\subsection{Integrality of placements}

\begin{lemma}\label{lem:int}
Let $R$ be a rectangle with integer corners, tiled by rectangles each of whose side
lengths is an integer. Then every tile has integer corners.
\end{lemma}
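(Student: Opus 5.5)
By Observation~\ref{obs:axis} all tiles are axis-parallel, so each tile $T$ has the form $[x,x+w]\times[y,y+h]$ with $w,h$ positive integers. Translating, we may assume $R=[0,W]\times[0,H]$ with $W,H$ integers. It suffices to show that every tile has an integer left coordinate $x$; the argument for $y$ is symmetric, and then $x+w$ and $y+h$ are integers as well.

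I would use a sweep, or induction on the tiles ordered by left coordinate. Suppose some tile has a non-integer left coordinate, and let $x_0$ be the least such coordinate. Pick a tile $T=[x_0,x_0+w]\times[y,y+h]$ with this left coordinate. Since $x_0>0$ (because $0$ is an integer), points just to the left of the midpoint of the left edge of $T$ lie in $R$. Finitely many tiles are closed and cover $R$, so some tile $T'$ meets every left-neighbourhood of some point $(x_0,y')$ with $y'$ in the open interval $(y,y+h)$. Because interiors are disjoint, $T'$ lies to the left of the line $x=x_0$ near that point, and so $T'$ has its right edge on $x=x_0$. Its left coordinate is then $x_0-w'$ with $w'$ an integer. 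That coordinate is $<x_0$, hence an integer by the minimality of $x_0$, which makes $x_0$ an integer — a contradiction.

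The main obstacle is making the neighbour-existence step rigorous with finitely many closed tiles. I would argue as follows. Let $p=(x_0,y')$ with $y'$ chosen in $(y,y+h)$ avoiding the finitely many bottom and top coordinates of tiles. Consider the points $p-(\varepsilon,0)$ for small $\varepsilon>0$. Each lies in some tile, and by finiteness one tile $T'$ contains such points for arbitrarily small $\varepsilon$; by closedness $T'$ then contains $p$. Since $T'\ne T$ and the interiors are disjoint, the open segment of $T$ near $p$ cannot lie in the interior of $T'$. Together with the choice of $y'$ in the open range of $T'$'s vertical extent, this forces $p$ to lie on the right edge of $T'$. The same argument is a clean alternative to the well-known checkerboard/integral proof, which I would avoid since it proves only that \emph{some} side is integral.
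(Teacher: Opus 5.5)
Your proof is correct and is essentially the paper's argument. Both use a horizontal line at a height avoiding every horizontal tile side, along which adjacent tiles abut exactly, so integrality propagates from $x=0$ by integer widths. The paper reads off the whole cross-section of one such line at once as a partition of $[0,W]$, while you step leftward to a single abutting neighbour and close with a minimal-counterexample induction, justifying that abutment step more explicitly than the paper does.
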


\begin{proof}
Place $R=[0,W]\times[0,H]$ with $W,H\in\mathbb{Z}$. Let $S$ be a vertical side of a tile,
lying on the line $x=c$. Only finitely many horizontal lines contain a horizontal side of
a tile, so we may choose a height $y$ in the relative interior of $S$ that avoids all of
them. Let $\ell$ be the horizontal line at height $y$. Every tile meeting $\ell$ meets it
in the interior of its vertical extent, hence meets $\ell$ in a segment equal to the
tile's full width; these segments have pairwise disjoint interiors and cover $\ell\cap R$,
so they partition $[0,W]$ into consecutive intervals whose lengths are the widths of the
corresponding tiles, and are therefore integers. Starting from the integer left endpoint
$0$, every endpoint of this partition is an integer. The point $c$ is such an endpoint,
being an endpoint of the cross-section of the tile with side $S$; hence $c\in\mathbb{Z}$.

Applying the same argument to vertical lines shows that every horizontal tile side lies at
an integer height. Every tile corner is the intersection of a vertical and a horizontal
tile side, so all corners are integer points.
\end{proof}

Consequently nothing is lost by searching on the unit grid: any tiling of the $n\times n$
square by integer rectangles is a partition of the $n^2$ unit cells.

\section{The search}\label{sec:search}

Fix $n$. The search has two stages, each of which is exhaustive.

\subsection{Stage 1: enumerating candidate tile sets}

By Lemma~\ref{lem:stair}, and since every tile fits inside the square, a set of tiles that
could possibly occur is described by integers
\begin{equation}\label{eq:staircase}
  1\le a_1<a_2<\cdots<a_k,\qquad
  n\ge b_1>b_2>\cdots>b_k,\qquad
  a_i\le b_i\ \ (1\le i\le k),\qquad
  \sum_{i=1}^{k}a_ib_i=n^2 ,
\end{equation}
with $k\ge2$. Note that all $k$ tiles are distinct, since two congruent rectangles are
comparable; so a tile set is a set, not a multiset.

The primary program enumerates all solutions of~\eqref{eq:staircase} by depth-first
search, appending tiles in the order $i=1,2,\dots$; at a node with prefix
$(a_1,b_1),\dots,(a_t,b_t)$ of area $A=\sum_{i\le t}a_ib_i$ it writes $\alpha=a_t$ and
$\beta=b_t$ (with $\alpha=0$, $\beta=n+1$ at the root) and iterates $a$ over
$\alpha<a\le\beta-1$ and, for each such $a$, $b$ over $a\le b\le\beta-1$. Three pruning
rules are applied. They are the only ones, and each is safe.

\begin{proposition}\label{prop:prune}
Let $r=n^2-A$ denote the residual area at a node.
\begin{enumerate}
\item[\textnormal{(P1)}] If $r=0$ the node is a complete tile set and is not extended.
\item[\textnormal{(P2)}] The $a$-loop is terminated at the first $a$ with $a^2>r$.
\item[\textnormal{(P3)}] For fixed $a$, the $b$-loop is terminated at the first $b$ with
$ab>r$.
\end{enumerate}
No solution of~\eqref{eq:staircase} extending the current prefix is lost.
\end{proposition}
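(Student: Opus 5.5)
The plan is to verify each pruning rule separately. In each case we take an arbitrary solution $(a_1,b_1),\dots,(a_k,b_k)$ of~\eqref{eq:staircase} whose first $t$ tiles are the current prefix. We then check that the branch containing this solution is never cut. The tool throughout is the staircase ordering of Lemma~\ref{lem:stair}, combined with positivity of areas: every later tile has positive area, so the areas of the tiles $t+1,\dots,k$ sum exactly to $r$.

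For (P1), note that if $r=0$ then any proper extension would add a tile of area $a_{t+1}b_{t+1}\ge1$. The total area would then exceed $n^2$. So the only solution extending the prefix is the prefix itself, which is recorded, and nothing is lost.

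For (P2), take a solution extending the prefix with $k>t$, and let $a=a_{t+1}$ be its next short side. The enumeration loops over the candidate $a$ in increasing order, so it suffices to show that $a^2\le r$. Then the loop, terminating only at the first $a'$ with $a'^2>r$, reaches $a$ before stopping, since $a'^2$ is increasing in $a'$. The bound follows because $a\le b_{t+1}$ gives $a^2\le a_{t+1}b_{t+1}\le r$, the tile's area being part of the residual. Termination is therefore safe.

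For (P3), fix $a=a_{t+1}$ and consider $b=b_{t+1}$. As before, $ab\le r$. The map $b'\mapsto ab'$ is strictly increasing because $a\ge1$, so every $b'$ at which the loop could stop satisfies $ab'>r\ge ab$, hence $b'>b$. Thus $b$ is visited before termination.

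It remains to confirm that the unpruned loop ranges contain $(a_{t+1},b_{t+1})$. Since $a_t<a_{t+1}\le b_{t+1}<b_t$, we have $\alpha<a\le\beta-1$ and $a\le b\le\beta-1$. At the root, $\beta=n+1$ encodes $b_1\le n$ and $\alpha=0$ encodes $a_1\ge1$. Induction on $t$ then shows that every solution's full sequence of prefixes is visited, and the sequence ends by (P1) exactly at the solution.

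The main, if mild, obstacle is the monotonicity argument that makes "terminate at the first failure" equivalent to "skip all failures." This needs the observation that $a^2$ and $ab$ are increasing along the iteration order, which is where positivity of integers enters.
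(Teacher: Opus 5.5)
Your proof is correct and follows essentially the paper's argument: positivity of areas and $b\ge a$ give $a^2\le ab\le r$ for the next tile of any completion, and the fact that $a^2$ and $ab$ increase along the loops makes stopping at the first failure safe. You argue directly that every solution's next tile is reached, whereas the paper argues that a pruned branch contains no solution; you also spell out the loop-range check and the induction on $t$, which the paper leaves implicit.
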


\begin{proof}
(P1) All tiles have area $\ge1$, so extending a prefix strictly increases $A$; a prefix of
area exactly $n^2$ admits no completion other than itself.
(P2) The next tile has $b\ge a$, so its area is at least $a^2$; if $a^2>r$ its area
exceeds the residual, and since all subsequent tiles again have positive area no
completion exists. As $a$ increases along the loop the same holds for every larger $a$, so
terminating the loop is safe.
(P3) For fixed $a$ the loop variable $b$ increases, so $ab>r$ persists for all larger $b$.
\end{proof}

The implementation also carries a nominal cap $k\le K_{\max}$; the runs supporting
Theorem~\ref{thm:main} use $K_{\max}=40$, which by Corollary~\ref{cor:area} exceeds the
largest possible $k$ ($\le9$) in the range $n\le26$, so the cap never binds. Setting
$K_{\min}=K_{\max}=k$ restricts the enumeration to sets of exactly $k$ tiles; this is how
Proposition~\ref{prop:min27} is computed, from the same program.

\begin{remark}
An earlier draft of the program carried a fourth rule, pruning on an \emph{upper} bound
for the area attainable by the remaining tiles. That bound was wrong and discarded valid
candidate sets. The rule is absent from both programs used here; see
Section~\ref{sec:verif}(v).
\end{remark}

\subsection{Stage 2: deciding tileability of a candidate set}

By Lemma~\ref{lem:int} this is a finite problem on the $n\times n$ grid of unit cells,
solved by backtracking on the lowest, then leftmost, empty cell. Completeness is the
following standard observation.

\begin{lemma}\label{lem:bl}
Let a partial placement of some of the tiles be given, and let $c$ be the empty cell in the
lowest non-full row and, within that row, in the leftmost empty column. In any completion
of the partial placement to a tiling, the tile covering $c$ has $c$ as its lower-left cell.
\end{lemma}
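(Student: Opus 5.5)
The plan is to argue directly from the choice of $c$ and the fact, from Lemma~\ref{lem:int}, that every tile in a completion occupies a block of whole unit cells. Let the rows be indexed $0,1,\dots,n-1$ from the bottom, and let $c$ sit in row $y_0$ and column $x_0$. By the definition of $c$, every cell in rows $0,\dots,y_0-1$ is already covered in the partial placement. In row $y_0$, every cell in columns $0,\dots,x_0-1$ is also covered.

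Fix a completion and let $T$ be the tile covering $c$. Either $T$ is one of the already placed tiles or it is newly placed. It cannot be an already placed tile, since $c$ is empty. So $T$ is placed in the completion and is disjoint in interior from every already placed tile. In particular, $T$ covers no cell that is already covered in the partial placement.

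By Lemma~\ref{lem:int}, $T$ occupies the cells $[x_1,x_2)\times[y_1,y_2)$ for integers $x_1\le x_0<x_2$ and $y_1\le y_0<y_2$. Suppose first that $y_1<y_0$. Then $T$ contains the cell in row $y_0-1$, column $x_0$. That cell lies in a full row, so it is already covered, which is a contradiction. Hence $y_1=y_0$. Now suppose $x_1<x_0$. Then $T$ contains the cell in row $y_0$, column $x_0-1$, which is covered by the choice of $x_0$ as the leftmost empty column. This is again a contradiction. Hence $x_1=x_0$, and $c$ is the lower-left cell of $T$.

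The only step needing care is the assertion that $T$ is a union of whole cells, so that containing $c$ and extending further left or down means containing the adjacent covered cell. This is exactly what Lemma~\ref{lem:int} provides. I do not expect any real obstacle beyond stating the cell bookkeeping cleanly.
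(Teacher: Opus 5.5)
Your proof is correct and is essentially the paper's argument: the tile covering $c$ cannot extend into the full rows below $c$, nor into the filled cells to its left in the same row. You make the cell bookkeeping explicit via Lemma~\ref{lem:int}, whereas the paper leaves it implicit because the search already works on the unit grid.
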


\begin{proof}
All cells in rows below $c$ are filled, so the tile covering $c$ cannot extend below $c$;
and all cells to the left of $c$ in its own row are filled, so it cannot extend to the
left of $c$.
\end{proof}

Hence, enumerating every unused tile in each of its two orientations with lower-left cell
at $c$ (and discarding those that do not fit inside the square or overlap a placed tile) is
exhaustive; recursing produces every tiling. If the recursion exhausts all branches, no
tiling exists.

The converse --- soundness of the acceptance test --- is equally short, and we record it
because the program's base case depends on it. Suppose the recursion has placed all $k$
tiles of the candidate set inside the square with pairwise disjoint interiors. Their union
$U$ is a finite union of closed rectangles, hence closed, and has area
$\sum_i a_ib_i=n^2$ by~\eqref{eq:staircase}, the tiles being disjoint. If $U$ were not the
whole square $S$, then $S\setminus U$ would be a nonempty relatively open subset of $S$
and would therefore have positive area, forcing $\operatorname{area}(S)>n^2$, a
contradiction. (Equality of areas alone would not suffice: a set of measure zero could be
missing. Closedness of $U$ is what rules that out.) So $U=S$ and the placement is a
tiling. Accepting as soon as every tile has been placed is therefore correct, and no
separate check that no cell was left empty is needed.

\subsection{Results}

\begin{table}[H]
\centering
\begin{tabular}{@{}rrl@{\qquad}rrl@{}}
\toprule
$n$ & candidate tile sets & tileable & $n$ & candidate tile sets & tileable\\
\midrule
$2$--$6$ & $0$ & no      & $17$ & $392$    & no\\
$7$      & $1$ & no      & $18$ & $732$    & no\\
$8$      & $1$ & no      & $19$ & $1\,278$ & no\\
$9$      & $2$ & no      & $20$ & $2\,311$ & no\\
$10$     & $4$ & no      & $21$ & $4\,179$ & no\\
$11$     & $8$ & no      & $22$ & $7\,474$ & no\\
$12$     & $18$ & no     & $23$ & $13\,256$ & no\\
$13$     & $33$ & no     & $24$ & $23\,344$ & no\\
$14$     & $67$ & no     & $25$ & $41\,326$ & no\\
$15$     & $116$ & no    & $26$ & $72\,777$ & no\\
$16$     & $219$ & no    & $\mathbf{27}$ & --- & \textbf{yes} ($8$ tiles)\\
\bottomrule
\end{tabular}
\caption{Outcome of the exhaustive search; $167\,538$ candidate tile sets in total for
$n\le26$, none of which tiles its square. Both implementations of
Section~\ref{sec:verif} produce every candidate-set count and every negative result for
$2\le n\le26$; both also find an eight-tile solution at $n=27$.}
\label{tab:res}
\end{table}

Table~\ref{tab:res} records the outcome; the negative entries prove
Theorem~\ref{thm:main}. The first success occurs at $n=27$, with the tile set
\[
  \{\,1\times27,\;3\times23,\;4\times21,\;5\times19,\;6\times18,\;7\times17,\;8\times16,
     \;9\times11\,\},
\]
which is precisely the tile set of Figure~C6(c) of~\cite{CFG}; the placement found is the
one drawn in Figure~\ref{fig:27}.

\begin{proposition}\label{prop:min27}
Every incomparable tiling of the $27\times27$ square uses at least eight tiles.
\end{proposition}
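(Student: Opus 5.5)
We prove that no incomparable tiling of the $27\times27$ square uses $k\le7$ tiles. The bounds already available do not settle this on their own. Corollary~\ref{cor:area} for $k=7$ gives $7\cdot8\cdot27/6=252\le729$, and Corollary~\ref{cor:side} gives $13\le27$. So the small values of $k$ cannot be excluded by counting, and the argument must again be an exhaustive search. It runs the same two-stage procedure of Section~\ref{sec:search} at $n=27$, with the tile count fixed by setting $K_{\min}=K_{\max}=k$, once for each $k=2,3,\dots,7$.

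Stage~1, for fixed $k$, enumerates every solution of~\eqref{eq:staircase} with exactly $k$ tiles and $n=27$. The restriction to exactly $k$ tiles only removes leaves of the depth-first tree. Pruning rules (P1)--(P3) remain safe by Proposition~\ref{prop:prune}. In addition, any branch whose depth exceeds $k$ may be cut, since appending tiles only increases the count. Every tile set that could occur in a $k$-tile incomparable tiling is therefore among the candidates listed; this uses Lemma~\ref{lem:stair} and the fact that every tile fits in the square.

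Stage~2 runs the lowest-then-leftmost backtracking on each candidate set. By Observation~\ref{obs:axis} and Lemma~\ref{lem:int}, every tiling lives on the unit grid with axis-parallel tiles. By Lemma~\ref{lem:bl}, the backtracking visits every tiling of the grid by the candidate set, so an exhausted search certifies that the set does not tile the square. The proposition then amounts to the computational claim that, for every $k\le7$, every candidate set at $n=27$ is rejected. Together with the eight-tile tiling of Figure~\ref{fig:27}, this shows that eight is attained and is the minimum.

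The main obstacle is the reliability of the computation rather than any mathematical step. The case $k=7$ at $n=27$ has a fairly large candidate list, and each candidate needs a full backtracking search. Correctness rests on the same safeguards as Table~\ref{tab:res}: both independent programs must produce identical candidate counts for each $k$ and identical negative verdicts. A useful consistency check is that summing the per-$k$ counts over all $k$ reproduces the unrestricted count at $n=27$. A cheap alternative for $k\le6$ would be to cite the theorem of~\cite{Nuij}, but in keeping with the Remark after Corollary~\ref{cor:area} we do not rely on it and let the search cover $2\le k\le7$ directly.
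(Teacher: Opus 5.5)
Your proposal is correct and is essentially the paper's proof. You run the Section~\ref{sec:search} search at $n=27$ with $K_{\min}=K_{\max}=k$ for each $k=2,\dots,7$, and the result is that none of the candidate sets ($10$, $251$, $3\,183$, $16\,527$, $39\,442$ and $44\,033$ of them, respectively) tiles the square. One small correction: the paper's fixed-$k$ runs use \texttt{incomp.c} alone, because \texttt{verify2.c} has no tile-count restriction, so the per-$k$ two-program cross-check you describe would be an extra safeguard that the paper does not actually supply.
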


\begin{proof}
Restricting the enumeration to exactly $k$ tiles exhausts $10$, $251$, $3\,183$,
$16\,527$, $39\,442$ and $44\,033$ candidate sets for $k=2,\dots,7$ respectively, and none
of them tiles. This is the program of Section~\ref{sec:search} with
$K_{\min}=K_{\max}=k$:
\begin{lstlisting}[style=sh]
for k in 2 3 4 5 6 7; do ./incomp 27 27 1 "$k" "$k"; done
\end{lstlisting}
(output in \texttt{logs/n27\_k2\_k7.log}).
\end{proof}

\subsection{\texorpdfstring{Squares of side $28$ to $31$}{Squares of side 28 to 31}}
\label{sec:2831}

Beyond $27$, the squares of side $28$, $29$, $30$ and $31$ are also tileable.
Figure~\ref{fig:2831} shows a tiling of each. The underlying coordinate certificates are
listed below, written as $(x,y;w,h)$ for a tile with lower-left corner $(x,y)$, width $w$
and height $h$, the origin being the lower-left corner of the square. Each is a
self-certifying object and each has been checked independently: the placements partition
the square, and the sorted dimensions form a strict staircase in the sense of
Lemma~\ref{lem:stair}.

\begin{figure}[p]
\centering
\begin{minipage}[t]{0.49\textwidth}\centering
\begin{tikzpicture}[scale=0.250]
  \tikzset{tile/.style={draw=black, line width=0.5pt, fill=blue!8}}
  \draw[tile] (0,0) rectangle ++(1,28);
  \draw[tile] (1,0) rectangle ++(3,24);
  \draw[tile] (4,0) rectangle ++(7,18);
  \draw[tile] (11,0) rectangle ++(17,8);
  \draw[tile] (11,8) rectangle ++(12,10);
  \draw[tile] (23,8) rectangle ++(5,20);
  \draw[tile] (4,18) rectangle ++(19,6);
  \draw[tile] (1,24) rectangle ++(22,4);
  \node[rotate=90, font=\tiny] at (0.5,14.0) {$1\times28$};
  \node[rotate=90, font=\scriptsize] at (2.5,12.0) {$3\times24$};
  \node[rotate=90, font=\small] at (7.5,9.0) {$7\times18$};
  \node[font=\small] at (19.5,4.0) {$8\times17$};
  \node[font=\small] at (17.0,13.0) {$10\times12$};
  \node[rotate=90, font=\small] at (25.5,18.0) {$5\times20$};
  \node[font=\small] at (13.5,21.0) {$6\times19$};
  \node[font=\scriptsize] at (12.0,26.0) {$4\times22$};
  \draw[line width=1.1pt] (0,0) rectangle (28,28);
\end{tikzpicture}
\\[0.3em] {\footnotesize $n=28$, eight tiles}
\end{minipage}\hfill
\begin{minipage}[t]{0.49\textwidth}\centering
\begin{tikzpicture}[scale=0.250]
  \tikzset{tile/.style={draw=black, line width=0.5pt, fill=blue!8}}
  \draw[tile] (0,0) rectangle ++(1,25);
  \draw[tile] (1,0) rectangle ++(16,6);
  \draw[tile] (17,0) rectangle ++(7,15);
  \draw[tile] (24,0) rectangle ++(5,17);
  \draw[tile] (1,6) rectangle ++(3,19);
  \draw[tile] (4,6) rectangle ++(13,9);
  \draw[tile] (4,15) rectangle ++(20,2);
  \draw[tile] (4,17) rectangle ++(14,8);
  \draw[tile] (18,17) rectangle ++(11,12);
  \draw[tile] (0,25) rectangle ++(18,4);
  \node[rotate=90, font=\tiny] at (0.5,12.5) {$1\times25$};
  \node[font=\small] at (9.0,3.0) {$6\times16$};
  \node[rotate=90, font=\small] at (20.5,7.5) {$7\times15$};
  \node[rotate=90, font=\small] at (26.5,8.5) {$5\times17$};
  \node[rotate=90, font=\scriptsize] at (2.5,15.5) {$3\times19$};
  \node[font=\small] at (10.5,10.5) {$9\times13$};
  \node[font=\tiny] at (14.0,16.0) {$2\times20$};
  \node[font=\small] at (11.0,21.0) {$8\times14$};
  \node[rotate=90, font=\small] at (23.5,23.0) {$11\times12$};
  \node[font=\scriptsize] at (9.0,27.0) {$4\times18$};
  \draw[line width=1.1pt] (0,0) rectangle (29,29);
\end{tikzpicture}
\\[0.3em] {\footnotesize $n=29$, ten tiles}
\end{minipage}
\\[1.4em]
\begin{minipage}[t]{0.49\textwidth}\centering
\begin{tikzpicture}[scale=0.250]
  \tikzset{tile/.style={draw=black, line width=0.5pt, fill=blue!8}}
  \draw[tile] (0,0) rectangle ++(2,22);
  \draw[tile] (2,0) rectangle ++(23,1);
  \draw[tile] (25,0) rectangle ++(5,19);
  \draw[tile] (2,1) rectangle ++(3,21);
  \draw[tile] (5,1) rectangle ++(14,9);
  \draw[tile] (19,1) rectangle ++(6,18);
  \draw[tile] (5,10) rectangle ++(10,12);
  \draw[tile] (15,10) rectangle ++(4,20);
  \draw[tile] (19,19) rectangle ++(11,11);
  \draw[tile] (0,22) rectangle ++(15,8);
  \node[rotate=90, font=\tiny] at (1.0,11.0) {$2\times22$};
  \node[font=\tiny] at (13.5,0.5) {$1\times23$};
  \node[rotate=90, font=\small] at (27.5,9.5) {$5\times19$};
  \node[rotate=90, font=\scriptsize] at (3.5,11.5) {$3\times21$};
  \node[font=\small] at (12.0,5.5) {$9\times14$};
  \node[rotate=90, font=\small] at (22.0,10.0) {$6\times18$};
  \node[rotate=90, font=\small] at (10.0,16.0) {$10\times12$};
  \node[rotate=90, font=\scriptsize] at (17.0,20.0) {$4\times20$};
  \node[font=\small] at (24.5,24.5) {$11\times11$};
  \node[font=\small] at (7.5,26.0) {$8\times15$};
  \draw[line width=1.1pt] (0,0) rectangle (30,30);
\end{tikzpicture}
\\[0.3em] {\footnotesize $n=30$, ten tiles}
\end{minipage}\hfill
\begin{minipage}[t]{0.49\textwidth}\centering
\begin{tikzpicture}[scale=0.250]
  \tikzset{tile/.style={draw=black, line width=0.5pt, fill=blue!8}}
  \draw[tile] (0,0) rectangle ++(2,23);
  \draw[tile] (2,0) rectangle ++(24,1);
  \draw[tile] (26,0) rectangle ++(5,19);
  \draw[tile] (2,1) rectangle ++(3,22);
  \draw[tile] (5,1) rectangle ++(14,9);
  \draw[tile] (19,1) rectangle ++(7,18);
  \draw[tile] (5,10) rectangle ++(10,13);
  \draw[tile] (15,10) rectangle ++(4,21);
  \draw[tile] (19,19) rectangle ++(12,12);
  \draw[tile] (0,23) rectangle ++(15,8);
  \node[rotate=90, font=\tiny] at (1.0,11.5) {$2\times23$};
  \node[font=\tiny] at (14.0,0.5) {$1\times24$};
  \node[rotate=90, font=\small] at (28.5,9.5) {$5\times19$};
  \node[rotate=90, font=\scriptsize] at (3.5,12.0) {$3\times22$};
  \node[font=\small] at (12.0,5.5) {$9\times14$};
  \node[rotate=90, font=\small] at (22.5,10.0) {$7\times18$};
  \node[rotate=90, font=\small] at (10.0,16.5) {$10\times13$};
  \node[rotate=90, font=\scriptsize] at (17.0,20.5) {$4\times21$};
  \node[font=\small] at (25.0,25.0) {$12\times12$};
  \node[font=\small] at (7.5,27.0) {$8\times15$};
  \draw[line width=1.1pt] (0,0) rectangle (31,31);
\end{tikzpicture}
\\[0.3em] {\footnotesize $n=31$, ten tiles}
\end{minipage}
\caption{Incomparable tilings of the squares of side $28$, $29$, $30$ and
$31$, drawn from the coordinate certificates of Section~\ref{sec:2831}.
Each tile is labelled by its sorted dimensions $a\times b$; in every panel the
short sides $a$ are distinct and the long sides $b$ decrease as the $a$ increase,
which by Lemma~\ref{lem:stair} is exactly pairwise incomparability. Only the
$27\times27$ tiling of Figure~\ref{fig:27} is known to be optimal in the number
of tiles (Proposition~\ref{prop:min27}).}
\label{fig:2831}
\end{figure}

\medskip
\noindent
$n=28$:
\[
\begin{aligned}
&(0,0;1,28),\quad(1,0;3,24),\quad(4,0;7,18),\quad(11,0;17,8),\\
&(11,8;12,10),\quad(23,8;5,20),\quad(4,18;19,6),\quad(1,24;22,4).
\end{aligned}
\]
$n=29$:
\[
\begin{aligned}
&(0,0;1,25),\quad(1,0;16,6),\quad(17,0;7,15),\quad(24,0;5,17),\quad(1,6;3,19),\\
&(4,6;13,9),\quad(4,15;20,2),\quad(4,17;14,8),\quad(18,17;11,12),\quad(0,25;18,4).
\end{aligned}
\]
$n=30$:
\[
\begin{aligned}
&(0,0;2,22),\quad(2,0;23,1),\quad(25,0;5,19),\quad(2,1;3,21),\quad(5,1;14,9),\\
&(19,1;6,18),\quad(5,10;10,12),\quad(15,10;4,20),\quad(19,19;11,11),\quad(0,22;15,8).
\end{aligned}
\]
$n=31$:
\[
\begin{aligned}
&(0,0;2,23),\quad(2,0;24,1),\quad(26,0;5,19),\quad(2,1;3,22),\quad(5,1;14,9),\\
&(19,1;7,18),\quad(5,10;10,13),\quad(15,10;4,21),\quad(19,19;12,12),\quad(0,23;15,8).
\end{aligned}
\]

\medskip
The witnesses for $n=29,30,31$ are simply the first tilings the search encountered and are
not claimed to minimise the number of tiles. We make no claim about which $n>31$ are
tileable.

\section{Reproducibility and validation}\label{sec:verif}

\subsection{The artifact}

Theorem~\ref{thm:main} depends on a computation, so the software, the exact commands and
the full output logs are archived with this paper, under the MIT licence, as the single
file
\begin{center}
\texttt{incomparable-square-artifact.tar.gz},\qquad
SHA-256 \;\texttt{8549d8f4\,8b9f31f8\,f888b24e\,53d1c40e}\\
\hspace{5.1cm}\texttt{1853537b\,15b9d4a1\,6e5aa54f\,cc41fc49}
\end{center}
The archive is attached to the arXiv version of this paper as an ancillary file, and is
also available from the author. It unpacks to a single directory containing:

\begin{itemize}
\item \texttt{incomp.c} --- the primary search of Section~\ref{sec:search}, which also
performs the fixed-$k$ runs behind Proposition~\ref{prop:min27}. \\
SHA-256 \texttt{a96a67ab\dots77d2ce4f}.
\item \texttt{verify2.c} --- the independent re-implementation described below. \\
SHA-256 \texttt{37532834\dots4059c9cc}.
\item \texttt{incompf.c} --- an existence-only variant of \texttt{incomp.c}, differing
from it \emph{only} by the guard \texttt{if (found\_any) return;} at the head of
\texttt{dfs()}, which stops the enumeration at the first tiling found. Because it does not
complete the enumeration it is used only to produce the witnesses of
Section~\ref{sec:2831}, never for a negative result.
\item \texttt{verify.py} --- a third implementation, in Python, which doubles as a
certificate checker for explicit tile sets.
\item \texttt{roundtrip.py} --- the randomised test of Section~\ref{sec:verif}(iv).
\item \texttt{figures.py} --- emits the TikZ of Figures~\ref{fig:27} and~\ref{fig:2831}
from the coordinate certificates, and audits the figures of this paper against them: in
\texttt{check} mode it parses every picture in the source and verifies that each drawn
rectangle and each dimension label matches the certificate (\texttt{make figcheck}).
\item \texttt{Makefile}, \texttt{README.md}, \texttt{LICENSE}, \texttt{SHA256SUMS}, and
the directory \texttt{logs/} with the machine-readable output of every run quoted here.
\end{itemize}

\noindent
\texttt{SHA256SUMS} pins every file individually. The full source of the two
proof-critical programs is also reproduced in Appendices~\ref{app:incomp}
and~\ref{app:verify2}, so the paper is self-contained even without the archive.

\paragraph{Environment.}
Ubuntu 22.04.5 LTS, Linux 6.8.0, x86-64; gcc 11.4.0 with \texttt{-O2 -Wall} (both
programs compile without warnings); CPython 3.10.12, standard library only; Intel Core
i5-1155G7 at 2.50\,GHz. All programs are serial, single-threaded and deterministic apart
from \texttt{roundtrip.py}, whose seed is an explicit command-line argument (seeds
$1,2,3,4$ were used).

\paragraph{Supported input range.}
The bitmask representation permits side lengths up to $64$ --- row occupancy is a
\texttt{uint64\_t} with one bit per column --- and the tile arrays permit up to $63$ tiles
(both programs require \texttt{k\_max < MAXTILES} and abort on inserting a $64$th),
far above the maximum permitted by Corollary~\ref{cor:area}: at most nine tiles for
$n\le26$, and at most eighteen even at $n=64$. Arguments are
validated and out-of-range input is rejected with exit status~$2$. We claim nothing about
behaviour near that limit, where the counters could in principle overflow and the
runtimes are in any case prohibitive: all results reported here use $n\le31$, and the
proof of Theorem~\ref{thm:main} uses $n\le26$.

\paragraph{Running the proof.}
\begin{lstlisting}[style=sh]
make                      # builds incomp, verify2, incompf
make proof1               # ./incomp  2 26 1 2 40          34.6 s
make proof2               # ./verify2 2 26                 46.6 s
\end{lstlisting}
Together these commands reproduce the computational verification used in the proof of
Theorem~\ref{thm:main}; the proof itself consists of the reductions of
Section~\ref{sec:search}, the two programs, and their verified execution. Both must print
\texttt{NO} for every $n$ from $2$ to $26$, and the candidate-set counts must agree entry
for entry. Their output is recorded in \texttt{logs/incomp\_squares\_2\_26.log} and
\texttt{logs/verify2\_squares\_2\_26.log}. The targets \texttt{prop2}, \texttt{k7},
\texttt{witness}, \texttt{figures}, \texttt{python} and \texttt{stress} reproduce the
supporting runs of Sections~\ref{sec:search}--\ref{sec:verif}; \texttt{make check} runs
everything except \texttt{witness}, in about three minutes.

\subsection{Validation}

The two programs of item~(i) are a genuine independence check on the computation. The
remaining items are supporting evidence --- useful for confidence, but not logically
independent proofs, and we do not present them as such.

\begin{enumerate}
\item[(i)] \textbf{Two independent programs.} \texttt{incomp.c} enumerates tile sets
through the staircase of Lemma~\ref{lem:stair} and tiles by filling the \emph{bottom-left}
free cell. \texttt{verify2.c} instead lists all $\binom{n+1}{2}$ candidate rectangles
lexicographically and selects them by increasing index, tests incomparability \emph{from
the definition} rather than via Lemma~\ref{lem:stair}, makes no ordering assumption in its
area test, fills the \emph{top-right} free cell (Lemma~\ref{lem:bl} applies in the
reflected form), and tries tiles in the reverse order. The two agree on the candidate-set
count and the answer for every $n$ from $2$ to $26$. Since \texttt{verify2.c} does not
assume Lemma~\ref{lem:stair}, the agreement of the counts is also a check on that lemma
and on its implementation.

\item[(ii)] \textbf{A third implementation.} \texttt{verify.py}, again definition-based,
reproduces the same counts and the same negative answers for $n\le17$; it is too slow
beyond that.

\item[(iii)] \textbf{Reproduction of published results.} All three tilings of Figure~C6
of~\cite{CFG} are confirmed by \texttt{verify.py} from the bare tile sets: the
$13\times22$ rectangle with seven tiles, the $34\times34$ square with seven, and the
$27\times27$ square with eight (\texttt{logs/certificates.log}). The $13\times22$ and
$27\times27$ tile sets were moreover \emph{found} by the search, matching~\cite{CFG}.
Restricting the search to exactly seven tiles returns ``no'' for every square of side at
most $27$, consistent with the result of~\cite{YRS} that $34$ is the smallest such square.

\item[(iv)] \textbf{Randomised completeness test.} $3\,641$ random tilings of squares ---
generated by random guillotine cuts together with random non-guillotine pinwheel
substitutions, with up to nine pieces and sides up to $26$ --- were stripped of their
positions and handed back to the tiling oracle as bare sets of rectangles. It recovered a
tiling in every case: no false negatives. This is a diagnostic, not part of the proof.

\item[(v)] \textbf{A bug caught by cross-checking.} A draft version of the enumerator
carried an extra pruning rule based on an upper bound for the area of the remaining tiles.
The bound was incorrect --- it used the smallest admissible short side where the largest
was required --- and the rule silently discarded valid candidate sets, reporting
$21\,871$ sets at $n=24$ instead of $23\,344$. The discrepancy was detected only because
the candidate-set counts, and not merely the yes/no answers, are compared between
implementations; the rule was removed. It is absent from both programs used here, whose
only pruning rules are those of Proposition~\ref{prop:prune}.
\end{enumerate}

\subsection{On priority}\label{sec:priority}

Problem~C5 was posed in 1991 and we have not found a published resolution of the question
quoted in Section~1. We searched the open literature for later work on incomparable
rectangles and cuboids, including citation trails from~\cite{CFG}, \cite{Nuij}
and~\cite{YRS} and general web and preprint searches; the secondary sources that treat the
topic, Wells~\cite{Wells} and MathWorld~\cite{MathWorld}, still report only the classical
bounds $7\le k\le8$ and do not mention a determination of the smallest square. That is
modest supporting evidence and no more.

We were not able to consult MathSciNet or zbMATH, and we cannot exclude a resolution in a
source we did not see --- in particular in the recreational-mathematics literature, where
this problem originates and where coverage by indexing services is uneven. We ask the
handling editor to complete those searches, and readers aware of earlier work to bring it
to our attention. The result stands on its own regardless of the outcome; only the claim
of novelty is at stake.

\section{What remains open in Problem C5}

Section~C5 of~\cite{CFG} raises further questions untouched by this computation.

\begin{itemize}
\item Grosek's original question~\cite{Grosek}, which he asked in every dimension: how few
pairwise incomparable $d$-dimensional boxes tile a $d$-cube? Jepsen~\cite{Jepsen} settled
several cases in dimension three; he showed that no cube admits a six-piece incomparable
tiling, and gave a seven-piece tiling of the $10\times10\times10$ cube that is
\emph{non-trivial}, meaning that not every piece has the full cube edge among its
dimensions. Whether $10$ is the smallest cube admitting a non-trivial incomparable tiling
appears to be open.
\item For tilings with many pieces, can the tiles be made arbitrarily close to squares,
i.e.\ can one force every ratio $a_i/b_i$ to be arbitrarily close to $1$?
\item For each $k\ge7$, exactly which $n$ admit a $k$-tile incomparable tiling of the
$n\times n$ square? Croft, Falconer and Guy assert that for each such $k$ only finitely
many $n$ fail~\cite[p.~86]{CFG}; they give neither a proof nor a reference, and we have
not found the exceptional sets recorded anywhere. It would be interesting to determine
them. The computations here settle the question for $k=7$ and $n\le27$ only.
\end{itemize}

\section*{Acknowledgments}

The author used AI to assist with the initial discovery of the result and drafting of the
proofs and subsequent review and refinement. The author independently verified the final
arguments and assumes responsibility for their correctness.

\appendix

\section{\texorpdfstring{Source of \texttt{incomp.c}}{Source of incomp.c}}\label{app:incomp}
\IfFileExists{incomp.c}{
\begin{lstlisting}[language=C]
/* incomp.c -- exhaustive search for tilings of a W x H integer rectangle by
 * pairwise INCOMPARABLE integer rectangles (Croft-Falconer-Guy, Problem C5).
 *
 * Two rectangles with sides sorted increasingly, (a1,a2) and (b1,b2), are
 * incomparable iff neither fits inside the other with sides parallel.  The
 * condition is symmetric in the two rectangles; after relabelling so that
 * a1 < b1, they are incomparable iff a2 > b2.
 *
 * Hence a set of pairwise incomparable rectangles is an antichain in the
 * product order, so listing it by increasing short side gives a strict
 * "staircase"
 *      a_1 < a_2 < ... < a_k    and    b_1 > b_2 > ... > b_k
 * (Lemma 1 of the accompanying note).  Stage 1 below enumerates every
 * staircase with a_i <= b_i <= n and sum a_i b_i = W*H; stage 2 tests each
 * one for tileability.
 *
 * Tiles are axis-parallel.  Because every tile has integer side lengths and
 * the container has integer corners, every tile corner is an integer point:
 * see the horizontal/vertical cross-section argument (Lemma 2 of the note).
 * A search on the unit grid is therefore exhaustive.
 *
 * Supported input range: 1 <= W, H <= 64 (row occupancy is a uint64_t
 * bitmask, one bit per column).  Arguments are validated in main().
 *
 * Usage:
 *   ./incomp <n_from> <n_to> 1 [k_min] [k_max]        squares n x n
 *   ./incomp <h_from> <h_to> 0 [k_min] [k_max] <W>    W x h, h swept
 *
 * Examples:
 *   ./incomp  2 26 1 2 40      the proof of Theorem 1
 *   ./incomp 27 27 1 7  7      Proposition 2, the case k = 7
 *   ./incomp 22 22 0 7  7 13   the 13 x 22 rectangle of Figure C6(a)
 */
#include <stdio.h>
#include <stdlib.h>
#include <string.h>
#include <stdint.h>

#define MAXSIDE  64      /* bits in the row bitmask */
#define MAXTILES 64      /* far above the k allowed by the area bound */

static int W, H;                 /* container */
static long long AREA;
static int KMIN, KMAX;           /* allowed number of tiles */

static int ta[MAXTILES], tb[MAXTILES];   /* current candidate tile set, a<=b */
static int nt;

static long long n_sets = 0, n_tilings_tested = 0, n_success = 0;
static int found_any = 0;

/* ------------------------- stage 2: tiling test ------------------------- */
static uint64_t rowmask[MAXSIDE];   /* bit x of row y set = cell (x,y) filled */
static uint64_t FULLROW;
static int used[MAXTILES];
static int place_x[MAXTILES], place_y[MAXTILES],
           place_w[MAXTILES], place_h[MAXTILES];

static uint64_t span(int x, int w)   /* bits x .. x+w-1, with x+w <= 64 */
{
    return (w >= 64) ? ~(uint64_t)0 : ((((uint64_t)1 << w) - 1) << x);
}

/* Fill the lowest, then leftmost, empty cell.  By Lemma 3 of the note the
 * tile covering that cell must have it as its lower-left cell, so trying
 * every unused tile there in both orientations is exhaustive. */
static int fill(int placed)
{
    if (placed == nt) return 1;
    int y = -1, x = -1, i, j;
    for (i = 0; i < H; i++) {
        if (rowmask[i] != FULLROW) {
            y = i;
            for (j = 0; j < W; j++)
                if (!((rowmask[i] >> j) & 1)) { x = j; break; }
            break;
        }
    }
    if (y < 0) return 0;   /* no empty cell but tiles remain */

    for (i = 0; i < nt; i++) {
        if (used[i]) continue;
        int o;
        for (o = 0; o < 2; o++) {
            int w = o ? tb[i] : ta[i];
            int h = o ? ta[i] : tb[i];
            if (o && ta[i] == tb[i]) continue;   /* square tile: one orientation */
            if (x + w > W || y + h > H) continue;
            uint64_t m = span(x, w);
            int ok = 1, r;
            for (r = y; r < y + h; r++)
                if (rowmask[r] & m) { ok = 0; break; }
            if (!ok) continue;
            for (r = y; r < y + h; r++) rowmask[r] |= m;
            used[i] = 1;
            place_x[i] = x; place_y[i] = y; place_w[i] = w; place_h[i] = h;
            if (fill(placed + 1)) return 1;
            used[i] = 0;
            for (r = y; r < y + h; r++) rowmask[r] &= ~m;
        }
    }
    return 0;
}

static int try_tiling(void)
{
    int i;
    n_tilings_tested++;
    FULLROW = span(0, W);
    for (i = 0; i < H; i++) rowmask[i] = 0;
    memset(used, 0, sizeof(used));
    if (!fill(0)) return 0;
    n_success++;
    if (!found_any) {
        found_any = 1;
        printf("  SOLUTION (%d tiles):\n", nt);
        for (i = 0; i < nt; i++)
            printf("    %2dx%-2d at (x=%2d,y=%2d) as %dx%d\n",
                   ta[i], tb[i], place_x[i], place_y[i], place_w[i], place_h[i]);
        fflush(stdout);
    }
    return 1;
}

/* ------------- stage 1: enumerate incomparable tile sets ---------------- */
/* Prefix (a_1,b_1)..(a_t,b_t); last_a = a_t, last_b = b_t (0 and n+1 at the
 * root).  Pruning rules P1-P3 of Proposition 1 of the note; each is proved
 * safe there.  No other pruning is applied. */
static void dfs(int last_a, int last_b, long long area)
{
    if (area == AREA) {                                  /* (P1) */
        if (nt >= KMIN && nt <= KMAX) { n_sets++; try_tiling(); }
        return;
    }
    long long rem = AREA - area;
    int a;
    for (a = last_a + 1; a <= last_b - 1; a++) {
        if ((long long)a * a > rem) break;                /* (P2): b >= a */
        int b;
        for (b = a; b <= last_b - 1; b++) {
            long long ar = (long long)a * b;
            if (ar > rem) break;                          /* (P3) */
            if (nt + 1 > KMAX) break;                     /* nominal cap */
            if (nt + 1 >= MAXTILES) {
                fprintf(stderr, "incomp: tile array overflow\n"); exit(3);
            }
            ta[nt] = a; tb[nt] = b; nt++;
            dfs(a, b, area + ar);
            nt--;
        }
    }
}

/* ------------------------------ driver ---------------------------------- */
static void usage(const char *p)
{
    fprintf(stderr,
        "usage: %s <from> <to> <square?> [k_min] [k_max] [width]\n"
        "   <square?> = 1 : sweep the squares n x n for n = from..to\n"
        "   <square?> = 0 : sweep <width> x h for h = from..to (width required)\n"
        "   defaults: k_min = 2, k_max = 40\n"
        "   all side lengths must satisfy 1 <= side <= %d\n", p, MAXSIDE);
    exit(2);
}

int main(int argc, char **argv)
{
    if (argc < 4) usage(argv[0]);
    int w0 = atoi(argv[1]), w1 = atoi(argv[2]);
    int mode_square = atoi(argv[3]);
    KMIN = argc > 4 ? atoi(argv[4]) : 2;
    KMAX = argc > 5 ? atoi(argv[5]) : 40;
    int fixedW = 0;
    if (!mode_square) {
        if (argc < 7) usage(argv[0]);
        fixedW = atoi(argv[6]);
    }
    if (w0 < 1 || w1 < w0 || w1 > MAXSIDE) {
        fprintf(stderr, "incomp: sweep range must satisfy 1 <= from <= to <= %d\n",
                MAXSIDE); return 2;
    }
    if (!mode_square && (fixedW < 1 || fixedW > MAXSIDE)) {
        fprintf(stderr, "incomp: width must satisfy 1 <= width <= %d\n", MAXSIDE);
        return 2;
    }
    if (KMIN < 1 || KMAX < KMIN || KMAX >= MAXTILES) {
        fprintf(stderr, "incomp: need 1 <= k_min <= k_max < %d\n", MAXTILES);
        return 2;
    }

    int n;
    for (n = w0; n <= w1; n++) {
        if (mode_square) { W = n; H = n; }
        else { W = fixedW; H = n; }
        AREA = (long long)W * H;
        nt = 0; found_any = 0;
        long long s0 = n_sets, t0 = n_tilings_tested, ok0 = n_success;
        printf("=== %d x %d ===\n", W, H); fflush(stdout);
        dfs(0, (W > H ? W : H) + 1, 0);
        printf("  tilesets=%lld  tested=%lld  tileable=%lld  -> %s\n",
               n_sets - s0, n_tilings_tested - t0, n_success - ok0,
               (n_success - ok0) ? "YES" : "NO");
        fflush(stdout);
    }
    return 0;
}
\end{lstlisting}}%
{\PackageError{C5-note}{incomp.c not found}{The archival source of this paper must be
compiled with incomp.c present in the working directory, so that Appendix A contains the
proof-critical program. Obtain it from the artifact.}}

\section{\texorpdfstring{Source of \texttt{verify2.c}}{Source of verify2.c}}\label{app:verify2}
\IfFileExists{verify2.c}{
\begin{lstlisting}[language=C]
/* verify2.c -- an INDEPENDENT re-derivation of the Problem C5 search,
 * deliberately different from incomp.c in every implementation choice:
 *
 *   - candidate rectangles are listed lexicographically and chosen by
 *     increasing index.  Incomparability is tested from the DEFINITION
 *     ("neither fits inside the other with sides parallel"), never via the
 *     a-increasing / b-decreasing staircase of Lemma 1;
 *   - the area test uses `continue', not `break', so no ordering assumption
 *     about candidate areas is made;
 *   - the tiling search fills the TOP-RIGHT-most free cell (incomp.c uses
 *     bottom-left; Lemma 3 of the note applies in reflected form) and tries
 *     the tiles in reverse order.
 *
 * Agreement of the two programs -- on the candidate-set counts as well as on
 * the yes/no answers -- is the cross-check reported in the note.
 *
 * Supported input range: 1 <= n <= 64 (row occupancy is a uint64_t bitmask).
 *
 * Usage:  ./verify2 <n_from> <n_to>        sweeps the squares n x n
 * Example: ./verify2 2 26                  the proof of Theorem 1
 */
#include <stdio.h>
#include <stdlib.h>
#include <string.h>
#include <stdint.h>

#define MAXSIDE  64
#define MAXTILES 64
#define MAXPAIRS ((MAXSIDE * (MAXSIDE + 1)) / 2)

static int N;                            /* square side */
static long long AREA;
static int pa[MAXPAIRS], pb[MAXPAIRS], NP;   /* lexicographic list of rectangles */
static int ca[MAXTILES], cb[MAXTILES], nc;   /* currently chosen set */
static long long n_sets, n_ok;
static int found;

/* the definition, spelled out */
static int fits_inside(int a1, int a2, int b1, int b2)   /* a1<=a2, b1<=b2 */
{
    return a1 <= b1 && a2 <= b2;
}
static int incomparable(int a1, int a2, int b1, int b2)
{
    return !fits_inside(a1, a2, b1, b2) && !fits_inside(b1, b2, a1, a2);
}

static uint64_t rowmask[MAXSIDE], FULL;
static int used[MAXTILES], sx[MAXTILES], sy[MAXTILES], sw[MAXTILES], sh[MAXTILES];

static uint64_t span(int x, int w)
{
    return (w >= 64) ? ~(uint64_t)0 : ((((uint64_t)1 << w) - 1) << x);
}

static int rec(int placed)
{
    if (placed == nc) return 1;
    int y = -1, x = -1, r, c, i;
    for (r = N - 1; r >= 0; r--) if (rowmask[r] != FULL) {     /* top row down */
        y = r;
        for (c = N - 1; c >= 0; c--) if (!((rowmask[r] >> c) & 1)) { x = c; break; }
        break;
    }
    if (y < 0) return 0;
    for (i = nc - 1; i >= 0; i--) {                            /* reverse order */
        if (used[i]) continue;
        int o;
        for (o = 0; o < 2; o++) {
            int w = o ? cb[i] : ca[i], h = o ? ca[i] : cb[i];
            if (o && ca[i] == cb[i]) continue;
            int x0 = x - w + 1, y0 = y - h + 1;               /* upper-right anchor */
            if (x0 < 0 || y0 < 0) continue;
            uint64_t m = span(x0, w);
            int ok = 1, rr;
            for (rr = y0; rr < y0 + h; rr++) if (rowmask[rr] & m) { ok = 0; break; }
            if (!ok) continue;
            for (rr = y0; rr < y0 + h; rr++) rowmask[rr] |= m;
            used[i] = 1; sx[i] = x0; sy[i] = y0; sw[i] = w; sh[i] = h;
            if (rec(placed + 1)) return 1;
            used[i] = 0;
            for (rr = y0; rr < y0 + h; rr++) rowmask[rr] &= ~m;
        }
    }
    return 0;
}

static void test(void)
{
    int i;
    n_sets++;
    FULL = span(0, N);
    for (i = 0; i < N; i++) rowmask[i] = 0;
    memset(used, 0, sizeof(used));
    if (!rec(0)) return;
    n_ok++;
    if (!found) {
        found = 1;
        printf("   SOLUTION (%d tiles):", nc);
        for (i = 0; i < nc; i++)
            printf(" %dx%d@(%d,%d)/%dx%d", ca[i], cb[i], sx[i], sy[i], sw[i], sh[i]);
        printf("\n"); fflush(stdout);
    }
}

static void search(int start, long long area)
{
    if (area == AREA) { if (nc >= 2) test(); return; }
    int j, t;
    for (j = start; j < NP; j++) {
        long long ar = (long long)pa[j] * pb[j];
        if (area + ar > AREA) continue;          /* deliberately not `break' */
        for (t = 0; t < nc; t++)
            if (!incomparable(pa[j], pb[j], ca[t], cb[t])) break;
        if (t < nc) continue;
        if (nc + 1 >= MAXTILES) {
            fprintf(stderr, "verify2: tile array overflow\n"); exit(3);
        }
        ca[nc] = pa[j]; cb[nc] = pb[j]; nc++;
        search(j + 1, area + ar);
        nc--;
    }
}

int main(int argc, char **argv)
{
    if (argc < 3) {
        fprintf(stderr, "usage: %s <n_from> <n_to>   (1 <= n_from <= n_to <= %d)\n",
                argv[0], MAXSIDE);
        return 2;
    }
    int n0 = atoi(argv[1]), n1 = atoi(argv[2]), n, a, b;
    if (n0 < 1 || n1 < n0 || n1 > MAXSIDE) {
        fprintf(stderr, "verify2: need 1 <= n_from <= n_to <= %d\n", MAXSIDE);
        return 2;
    }
    for (n = n0; n <= n1; n++) {
        N = n; AREA = (long long)n * n; NP = 0; nc = 0; found = 0;
        for (a = 1; a <= n; a++) for (b = a; b <= n; b++) { pa[NP] = a; pb[NP] = b; NP++; }
        long long s0 = n_sets, o0 = n_ok;
        search(0, 0);
        printf("%2d x %2d : tilesets=%-8lld tileable=%-4lld %s\n",
               n, n, n_sets - s0, n_ok - o0, (n_ok - o0) ? "YES" : "NO");
        fflush(stdout);
    }
    return 0;
}
\end{lstlisting}}%
{\PackageError{C5-note}{verify2.c not found}{The archival source of this paper must be
compiled with verify2.c present in the working directory, so that Appendix B contains the
independent program. Obtain it from the artifact.}}

\end{document}